\numberwithin{equation}{section}
\theoremstyle{plain}
 \newtheorem{theorem}{Theorem}[section]
 \newtheorem{lemma}[theorem]{Lemma}
 \newtheorem{observation}[theorem]{Observation}
\theoremstyle{definition}
 \newtheorem{definition}[theorem]{Definition}
\newenvironment{enumeratei}{\begin{enumerate}[\quad\upshape (i)]} {\end{enumerate}}
\newcommand \datum {February 18, 2018} % search for ALL of its occurrences !!!
\newcommand \ncross {\eqref{eqpbxcspLndf}-crosses}
\newcommand \notncross {does not \eqref{eqpbxcspLndf}-cross}
\newcommand \sncross {strongly \eqref{eqpbxcspLndf}-cross each other}
\newcommand \wncross {weakly \eqref{eqpbxcspLndf}-cross each other}
\newcommand \notwncross {do not \eqref{eqpbxcspLndf}-cross each other weakly}
\newcommand \ocrossing {Fejes-T\'oth-crossing}
\DeclareMathOperator\Con{Con}
\newcommand \CrCo {\textup{HCC}}
\newcommand \rotleq {\leq_{\textup{rot}}}
\newcommand \rotnleq {\nleq_{\textup{rot}}}
\renewcommand \phi {\varphi}
\renewcommand \epsilon {{\boldsymbol\varepsilon}}
\newcommand \ucirc {C_{\kern-1pt\textup{unit}}}
\newcommand \bnd [1] {\partial #1} %boundary
\newcommand \lne {\ell}
\newcommand \slcr [1] {\textup{Sli}(#1)}
\newcommand \cylin {\textup{Cyl}}
\newcommand \dir [1] {\textup{dir}(#1)}
\newcommand \deletethis [1] {}  % To erase something
\newcommand \phullo {\textup{Conv}} % Real HULL Operator in  R^#1
\newcommand\set [1]{\{#1\}}
\newcommand \tuple [1] {\langle #1 \rangle}
\newcommand \pair [2] {\tuple{#1,#2}}
\newcommand \real {\mathbb R}
\newcommand \preal {\real^{2}}
\newcommand \red [1] {{\color{red}#1\color{black}}}
\newcommand \tbf[1] {\textbf{#1}}
\begin{document}
\title%[Circles and crossing planar sets]
{Circles and crossing planar compact convex sets}

\author[G.\ Cz\'edli]{G\'abor Cz\'edli}
\email{czedli@math.u-szeged.hu}
\urladdr{http://www.math.u-szeged.hu/\textasciitilde{}czedli/}
\address{University of Szeged\\ Bolyai Institute\\Szeged,
Aradi v\'ertan\'uk tere 1\\ Hungary 6720}

\thanks{This research was supported by
the Hungarian Research Grant KH 126581}

\begin{abstract} 
Let $K_0$ be a compact convex subset of the plane $\preal$, and assume that whenever $K_1\subseteq \preal$ is congruent to $K_0$, then $K_0$ and $K_1$ are not  crossing in a natural sense due to  L.\ Fejes-T\'oth. A theorem of L.\ Fejes-T\'oth from 1967 states that the assumption above  holds for  $K_0$ if and only if $K_0$ is a disk. In a paper appeared in 2017, the present author introduced a new concept of crossing, and proved that L.\ Fejes-T\'oth's theorem remains true if the old concept is replaced by the new one. Our purpose is to describe the hierarchy among several variants of the new concepts and the old concept of crossing. In particular, we prove that each variant of the  new concept of crossing is more restrictive then the old one. Therefore, L.\ Fejes-T\'oth's theorem from 1967 becomes an immediate consequence of the 2017 characterization of circles   but not conversely. 
Finally, a mini-survey shows that this purely geometric paper has 
 precursor in combinatorics and, mainly, in lattice theory.
\end{abstract}

\subjclass {Primary 52C99, secondary 52A01, 06C10}
%52C (1991-now) Discrete geometry 
%52C99 (1991-now) None of the above, but in this section 
%52A01 (1980-now) Axiomatic and generalized convexity  

\keywords{Compact convex set, circle, characterization of circles, disk, crossing, abstract convex geometry,  Adaricheva-Bolat property,  boundary of a compact convex set, supporting line, slide-turning,   lattice}

\date{\red{\datum}}

\maketitle

\section{Aim and introduction}
Denoting the (usual real) Euclidean plane  by $\preal$, let $X$ and $Y$ be subsets of $\preal$. We say that $X$ and $Y$ are \emph{congruent} (also called \emph{isometric}) if
there exists a distance-preserving bijection $\phi\colon \preal \to \preal$ such that $\phi(X)=Y$. The \emph{convex hull} $\phullo(X)$ of $X\subseteq \preal$ is the smallest convex subset of $\preal$ that contains $X$. \emph{Disks} and \emph{circles} are subsets of $\preal$ of the form
$\set{\pair x y : x^2+y^2 \leq r^2}$ and $\set{\pair x y : x^2+y^2 = r^2}$ where $r\in \real$, respectively; they are necessarily nonempty sets.

There is an everyday but not precise meaning of the clause that ``two congruent convex subsets $X$ and $Y$ of $\preal$ are \emph{crossing}''. For example, the ``plus'' symbol $+$ is the union of two congruent (in fact, rotated) crossing copies of the ``minus'' symbol $-$.  
Similarly, if $X$ is a convex hull of an ellipse that is not a circle and $Y$ is obtained from $X$ by rotating it around its center point by 90 degrees, then $X$ and $Y$ are crossing. In order to make a distinction from new concepts to be discussed later, we name the first
precisely defined concept of crossing after its inventor, see Fejes-T\'oth~\cite{fejestoth};  see also the review MR0226479 (37 \#2068) on  \cite{erdosstraus} in MathSciNet.

\begin{definition} Let $X$ and $Y$ be convex subsets of the Euclidean plane. We say that $X$ and $Y$ are \emph{\ocrossing{}} if none of the sets $X\setminus  Y$ and $Y\setminus X$ are  connected.
\end{definition} 

A subset $X$ of $\preal$ is \emph{connected} (in other words, \emph{path-connected}) if for any two points $A,B\in X$ there is a continuous curve $g\subseteq X$ from $A$ to $B$. In particular, the empty set is connected; so if $X$ and $Y$ are \ocrossing{}, then  $X\setminus  Y$ and $Y\setminus X$ are nonempty.
Let us recall the following theorem.

\begin{theorem}[Fejes-T\'oth~\cite{fejestoth}] \label{ftthm}
For every nonempty compact subset $X$ of the Euclidean plane $\preal$, the following two conditions are equivalent.
\begin{enumerate}[\upshape (a)]
\item\label{ftthma} There exists no $Y\subseteq \preal$ such that $Y$
is congruent to $X$ and $X$ and $Y$ are \ocrossing{}.
\item\label{ftthmd} $X$ is a disk. 
\end{enumerate}
\end{theorem}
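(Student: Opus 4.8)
The plan is to prove the two implications of Theorem~\ref{ftthm} separately, treating $X$ throughout as compact \emph{convex}: a non-convex compact set is never a disk, and the notion of \ocrossing{} is meant to apply to convex bodies, so this is the substantive case. The direction \ref{ftthmd}$\Rightarrow$\ref{ftthma} is the easy half. If $X$ is a disk and $Y$ is congruent to $X$, then $Y$ is a disk of the same radius, and two circles of equal radius either coincide or meet in at most two points. I would simply run through the resulting cases --- equal centres; distinct centres with overlapping disks; disjoint or tangent disks --- and observe that in each of them both $X\setminus Y$ and $Y\setminus X$ are empty, or the whole of one disk minus at most a point, or a single lune, hence connected. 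Thus $X$ and $Y$ are never \ocrossing{}, which is exactly \ref{ftthma}.

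For the hard direction \ref{ftthma}$\Rightarrow$\ref{ftthmd} I would argue by contraposition: assuming the compact convex set $X$ is \emph{not} a disk, I build a congruent copy $Y$ that is \ocrossing{} with $X$, refuting \ref{ftthma}. Assume first that $X$ is two-dimensional; the degenerate case of a segment is easy and handled separately. Let $O$ be the Steiner point of $X$, which is an interior point, and let $h(\phi)$ be the support function of $X$ measured from $O$; this choice of $O$ makes the first Fourier harmonic of $h$ vanish. Now $X$ is a disk exactly when $h$ is constant, i.e.\ when $h$ has no harmonic of order $k\ge 2$ either, since the $0$th harmonic merely records the radius. As $X$ is not a disk, $h$ carries a nonzero harmonic of some order $k\ge 2$.

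I then let $Y=\rho_\theta(X)$ be the rotation of $X$ about $O$ through a small angle $\theta$, so that $h_Y(\phi)=h(\phi-\theta)$, and set $d(\phi)=h(\phi)-h(\phi-\theta)$. The mean of $d$ vanishes, and, since $h$ has no first harmonic, neither does $d$; moreover $d\not\equiv 0$ for all but finitely many $\theta$. Hence every nonzero harmonic of $d$ has order $\ge 2$, and the Sturm--Hurwitz theorem forces $d$ to change sign at least four times on the circle. Finally I would convert sign changes of $d$ into genuine crossings: for two convex bodies the transversal intersections of the boundary curves $\bnd X$ and $\bnd Y$ occur in an alternating inside/outside pattern along $\bnd X$, and at least four sign changes of $d$ yield at least four such intersections (after perturbing $\theta$ to make all contacts transversal). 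Four intersection points then split each of $X\setminus Y$ and $Y\setminus X$ into at least two components, so neither difference is connected and $X,Y$ are \ocrossing{}, the desired contradiction.

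The main obstacle is exactly this last step: rigorously matching the sign changes of the support-function difference $d$ with the transversal crossings of the two boundaries, and reading off the disconnection from the alternation. Convexity is what makes it work, since it guarantees that each ``outside'' arc of $\bnd X$ is the outer boundary of a bona fide component of $X\setminus Y$; but the degenerate features must be handled with care --- boundary segments and corners, tangential (non-transversal) contacts, and highly symmetric bodies for which special angles $\theta$ produce a coinciding or merely bitangent copy. I would clear these away by perturbing $\theta$ slightly so that every contact becomes transversal while the four-sign-change lower bound coming from Sturm--Hurwitz, and hence the count of at least four crossings, is preserved.
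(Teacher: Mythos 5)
Your proposal is not comparable to a proof in the paper, because the paper does not prove Theorem~\ref{ftthm} at all: it is quoted from Fejes-T\'oth~\cite{fejestoth}, and the paper's own contribution (Observation~\ref{observatimpLc}) is to derive it as a corollary of Theorem~\ref{thmchgcharAB} together with Lemma~\ref{lemmamain}\,\eqref{thmthisb} --- if $X$ is not a disk, Theorem~\ref{thmchgcharAB} supplies a congruent $Y$ that \eqref{eqpbxcspLndf}-crosses $X$ weakly, and the Main Lemma upgrades this to \ocrossing{}, refuting \eqref{ftthma}. What you give instead is a direct proof in the classical harmonic-analysis style (Steiner point to kill the first Fourier harmonic of the support function, rotation to produce a difference $d$ whose lowest nonzero harmonic has order at least $2$, Sturm--Hurwitz to force at least four sign changes); this is essentially Fejes-T\'oth's original line of argument as presented, e.g., by Groemer, and it buys a self-contained, quantitative proof that does not pass through the 2017 characterization. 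Your easy direction \eqref{ftthmd}$\Rightarrow$\eqref{ftthma} and your Fourier bookkeeping (the Steiner point is rotation-equivariant, so $d$ has no harmonics of order $0$ or $1$, and $d\not\equiv 0$ for all but finitely many $\theta$ once some harmonic of order $k\ge 2$ survives) are correct.

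There is, however, a concrete gap exactly where you locate the ``main obstacle'', and your proposed fix does not close it. You want to convert the four sign changes of $d$ into four \emph{transversal} intersections of $\bnd X$ and $\bnd Y$ ``after perturbing $\theta$''; but $\theta$ is a single real parameter, and for a general compact convex body there is no guarantee that any value of $\theta$ makes all boundary contacts transversal or even finite in number (the boundaries may share arcs, and corners or flat edges can produce stubbornly tangential contacts for every $\theta$ in the admissible set). The transversality detour is also unnecessary. From four cyclically alternating directions $\phi_1,\phi_2,\phi_3,\phi_4$ with $d(\phi_1),d(\phi_3)>0$ and $d(\phi_2),d(\phi_4)<0$, you get points $P_1,P_3\in X\setminus Y$ (touching points of the supporting lines of $X$ with outer normals $\phi_1,\phi_3$) and $Q_2,Q_4\in Y\setminus X$ interleaved with them; what you must then prove is that a path in $X\setminus Y$ from $P_1$ to $P_3$ would, by convexity and a separation argument, trap $Q_2$ or $Q_4$ inside $X$ --- a contradiction. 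This is precisely the kind of ``ear'' argument the paper carries out in the proof of Lemma~\ref{lemmamain} (see \eqref{eqpbxdhGmsBSv}) for its own crossing notion, and you should argue it directly from the sign pattern rather than through a transversality claim you cannot justify. Until that implication (four sign changes of $h_X-h_Y$ implies \ocrossing{}) is proved, the hard direction is incomplete.
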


Hence, condition \eqref{ftthma} above characterizes disks among compact subsets of $\preal$. Since circles are exactly the boundaries of disks and disks are the convex hulls of circles, 
Theorem~\ref{ftthm} gives the following \emph{characterization of circles} immediately: 
\begin{equation}
\parbox{9.9cm}{a subset $X\subseteq \preal$ is a circle if and only if $X$ is the boundary of $\phullo(X)$ and $\phullo(X)$ satisfies condition \eqref{ftthma} of Theorem~\ref{ftthm}.}
\label{eqpbxcRcRcrDs}
\end{equation}
Since \eqref{ftthmd} trivially implies \eqref{ftthma}, the essence of   Theorem~\ref{ftthm} is that  \eqref{ftthma} implies \eqref{ftthmd}.    
Note that some stronger statements are also known. It is implicit in Fejes-T\'oth~\cite{fejestoth} that 
if we replace ``is congruent to" in \eqref{ftthma} by ``is obtained by a rotation from'', then  \eqref{ftthma} becomes weaker but it still implies  \eqref{ftthmd}; in this way,  Theorem~\ref{ftthm} turns into a stronger statement. Also, Fejes-T\'oth~\cite{fejestoth} 
extends the validity of Theorem~\ref{ftthm} 
for subsets of a sphere, while 
Erd\H os and Straus~\cite{erdosstraus} 
extends the results of \cite{fejestoth} for higher dimensions.
As a by-product of a long proof given in  Cz\'edli~\cite{czgcharcircab}, we are going to cite  a statement as Theorem~\ref{thmchgcharAB} here, which looks similar to  Theorem~\ref{ftthm}. 
A new way of crossing has naturally been introduced in the above-mentioned long proof. The main result of the present paper, Theorem~\ref{thmthemain}, describes the hierarchy for the old concept and some variants of the new concept of crossing for compact convex subsets of $\preal$. As a corollary of the main result, it will appear that  Theorem~\ref{ftthm} follows trivially from Theorem~\ref{thmchgcharAB} but not conversely; see Observation~\ref{observatimpLc}.

\subsection*{Outline and prerequisites} The rest of the paper is structured as follows. In Section~\ref{sectnewcross}, we define some new concepts of crossing and formulate our main result, Theorem~\ref{thmthemain}. Section~\ref{sectCompMn} is devoted to the proof of  Theorem~\ref{thmthemain}; up to the end of this section, 
 the paper is intended to be readable for most mathematicians. 
Finally, Section~\ref{sectionoutlook} is a historical mini-survey to point out that besides geometry, this paper has
precursors in  combinatorics and, mainly, in  lattice theory; this section can be interesting mainly for those who are a bit  familiar  with the mentioned fields.

\section{New concepts of crossing and our main result}\label{sectnewcross}
First, we recall some notations, well-known concepts, and well-known facts from Cz\'edli~\cite{czgcharcircab} and Cz\'edli and Stach\'o~\cite{czgstacho}. In order to ease our terminology, let us agree that every convex set in this paper is assumed to be nonempty, even if this is not always mentioned. By a \emph{direction} we mean a point $\alpha$ on the
\begin{equation}
\text{\emph{unit circle}\quad $\ucirc:=\set{\pair x y\in\preal: x^2+y^2=1}$.}
\label{equnitCrlcle}
\end{equation}
A direction $\pair x y\in\ucirc$ is always identified with the angle $\alpha$ for which we have that $\pair x y =\pair{\cos\alpha}{\sin\alpha}$; of course, $\alpha$ is determined only modulo $2\pi$.
This convention allows us to write, say, $\pi < \dir\lne <2\pi$ instead of saying that the direction of a line $\lne$ is strictly in the lower half-plane. If $\lne_1$ and $\lne_2$ are (directed) lines
that are equal as undirected lines but their orientations are opposite, that is, $\dir{\lne_2}=\dir{\lne_1}+\pi$, then we denote $\lne_2$ by $-\lne_1$.  As another notational convention, let us agree that for points $A$ and $B$ of a line $\lne$, we write $A<B$ or $A<_\lne B$ to denote that the direction of the vector from $A$ to $B$ is the same as that of $\lne$. For example, if $\lne$ is the $x$-axis with $\dir \lne=\pair 1 0\in\ucirc$ or, in other words, $\dir\lne =0$,  then $\pair {1}0 < \pair 2 0$.
Unless otherwise stated explicitly,  
\begin{equation}
\text{every line in this paper will be \emph{directed}};
\label{eqtxtbrdRsct}
\end{equation}
we denote the direction of a line  $\lne$ by $\dir \lne\in \ucirc$. 
In our figures, the direction of a line $\lne$ is denoted by an arrowhead, and we use a half arrowhead to indicate the left half-plane determined by $\lne$.  
Let $X\subseteq \preal$ be a compact convex set.  Its  \emph{boundary}  will be denoted by $\bnd X$. An undirected line $\lne$ is an \emph{undirected supporting line} of $X$ if $\lne\cap X\neq\emptyset$ and $X$ lies in one of the closed half-planes determined by $\lne$.
\begin{equation}
\parbox{9.0cm}{A (directed) line $\lne$ is a \emph{supporting line} of $X$ if $\lne\cap X\neq\emptyset$ and $X$ lies in the \emph{left} closed half-plane determined by $\lne$.}
\label{eqpbxDsPlNdf}
\end{equation}
The properties of supporting lines that we need here are more or less clear by geometric intuition and they are discussed in  Cz\'edli and Stach\'o~\cite{czgstacho}
at an elementary level. For a more advanced treatise, one can resort to Bonnesen and Fenchel~\cite{bonnesenfenchel}. 
Two sets are \emph{incomparable} if none of them is a subset of the other. Note that 
\begin{equation}
\parbox{9.9cm}{for each $\alpha\in\ucirc$, there is a unique supporting line $\lne$ of $X$ such that $\dir\lne=\alpha$. Furthermore, any two incomparable compact convex sets $X_1$ and $X_2$ have a common \emph{directed} supporting line that is also a supporting line of $\phullo(X_1\cup X_2)$.}
\label{eqtxtuNspalpha}
\end{equation}
After \eqref{eqtxtbrdRsct}, the adjective ``directed'' above occurs only for emphasis. Note that two disjoint compact convex subsets of $\preal$ with nonempty interiors have exactly two  common supporting lines and four non-directed common supporting lines; see the second half of Figure~\ref{figfour} for an illustration. If disjointness is not stipulated, then two incomparable compact convex sets can have much more than two common supporting lines. 
By basic properties of continuous functions and since our lines are directed, if   $X_1$ and $X_2$ are compact convex subsets of $\preal$ and   $\lne$ is a common supporting line of them, then $\lne\cap(X_1\cup X_2)$, with respect to its direction $\dir\lne$, has a unique \emph{first point} and a unique \emph{last point}.

\begin{definition}\label{defnewcross} Let $D$ and $L$ be  compact convex subsets of $\preal$. We say that $D$ \emph{\ncross{}} $L$  if $D$ and $L$  have two \emph{distinct} common supporting lines $t$ and $t'$ such that
\begin{equation}
\parbox{8.0cm}{the first point $U_D$ of $(D\cup L)\cap t$ is in $D\setminus L$,\\
the last point $U_L$ of $(D\cup L)\cap t$ is in $L\setminus D$,\\
the first point $U'_D$ of $(D\cup L)\cap t'$ is in $D\setminus L$, and\\
the last point $U'_L$ of $(D\cup L)\cap t'$ is in $L\setminus D$;
}
\label{eqpbxcspLndf}
\end{equation}
where ``first'' and ``last'' refer to the orientation of the common supporting line in question. Also, we say that $D$ and $L$ \emph{\sncross{}} if $D$ \ncross{} $L$ and $L$ \ncross{} $D$. 
Finally, we say that $D$ and $L$ \emph{\wncross{}} if $D$ \ncross{} $L$ or $L$ \ncross{} $D$.
\end{definition}

Armed with Definition~\ref{defnewcross}, we recall the following statement from Cz\'edli~\cite{czgcharcircab}.

\begin{theorem}[Lemma 3.3 in \cite{czgcharcircab}] \label{thmchgcharAB}
For every nonempty compact convex subset $X$ of the Euclidean plane $\preal$, the following two conditions are equivalent.
\begin{enumerate}[\upshape (a)]
\item\label{ftthmc} There exists no $Y\subseteq \preal$ such that $X$ and $Y$ \wncross{} and, in addition,  $Y$
is congruent to $X$.
\item\label{ftthmd} $X$ is a disk. 
\end{enumerate}
\end{theorem}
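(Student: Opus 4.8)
The plan is to establish the equivalence by proving the two implications separately. I regard ``$X$ is a disk'' $\Rightarrow$ \eqref{ftthmc} as the easy direction and its converse, in contrapositive form, as the substantial one.

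For the easy direction, suppose $X$ is a disk and let $Y$ be congruent to $X$. An isometry carries a disk to a disk of the same radius, so $Y$ is a disk of the same radius $r$ as $X$. If $Y=X$, then $X\setminus Y=\emptyset$ and no crossing is possible, so assume the two centres are distinct; then $X$ and $Y$ are incomparable. Placing the two centres on a common axis, a direct computation with the tangency points shows that the supporting line in a direction $\alpha\in\ucirc$ is common to both disks exactly when the segment joining the centres is parallel to that line; hence $X$ and $Y$ have precisely two common directed supporting lines, namely the two external common tangents, carrying opposite orientations. On one of them the contact point of $X$ precedes that of $Y$, and on the other the order is reversed. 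Consequently the four-fold pattern of \eqref{eqpbxcspLndf} can never be met by two supporting lines at once, in either direction, so $X$ and $Y$ \notwncross{}. As $Y$ was arbitrary, \eqref{ftthmc} holds.

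For the converse I argue contrapositively: assuming $X$ is \emph{not} a disk, I produce a congruent copy $Y$ that \wncross{} $X$. The guiding fact is that a compact convex $X$ with nonempty interior is a disk if and only if some point $O$ is equidistant from all supporting lines of $X$, since $X$ is the intersection of the left closed half-planes of its supporting lines. Taking $O$ to be the centre of the minimal enclosing disk $B$ of $X$, the hypothesis that $X$ is not a disk means $X\subsetneq B$, so the distance from $O$ to the supporting line in direction $\alpha$ is a non-constant function of $\alpha$, while $\bnd X$ still meets $\bnd B$ in a set of contact points surrounding $O$. I would then let $Y$ be the image of $X$ under the rotation $\rho=\rho_{O,\phi}$ about $O$ by a suitably chosen angle $\phi$ --- a slide-turning of $X$ --- and study the common supporting lines of $X$ and $Y$ as $\phi$ varies. (The degenerate case in which $X$ has empty interior, i.e.\ a segment, is disposed of separately and easily.)

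The heart of the matter, and the step I expect to be the main obstacle, is to select $\phi$ and two directions $\alpha,\beta\in\ucirc$ for which the two common supporting lines $t=t_\alpha$ and $t'=t_\beta$ of $X$ and $Y$ carry the \emph{same} pattern demanded by \eqref{eqpbxcspLndf}: on each, the first point lies in $X\setminus Y$ and the last point lies in $Y\setminus X$. This is precisely what fails for equal disks, where the symmetry forces the two candidate lines to carry opposite orders; the role of the non-roundness of $X$ is to break exactly this symmetry. Concretely, I would use the non-constancy of the support distance to locate a contact point near which $\bnd Y=\rho(\bnd X)$ crosses $\bnd X$ transversally, making both differences $X\setminus Y$ and $Y\setminus X$ nonempty and disconnected there, and then argue that for one sign of a small $\phi$ the contact points of the two supporting lines bracketing this region fall strictly into the required differences in the prescribed order. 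The delicate points are verifying that all four contact points land strictly inside the differences (not merely on the two boundaries) and that a single $\phi$ serves both supporting lines simultaneously; a continuity and monotonicity analysis in $\phi$, organized through the slide-turning formalism of \cite{czgcharcircab}, is what I would use to make the choice rigorous. Once such a $Y$ is obtained, $X$ and $Y$ \wncross{}, so \eqref{ftthmc} fails, completing the contrapositive and hence the proof.
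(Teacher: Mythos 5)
First, a point of comparison: this paper does not prove Theorem~\ref{thmchgcharAB} at all. It is imported as Lemma 3.3 of \cite{czgcharcircab}, where it arises as a by-product of a long proof of the characterization of circles by the Adaricheva--Bolat property; so there is no in-paper argument to measure your proposal against, and any proof you offer must stand entirely on its own. Your easy direction (disk $\Rightarrow$ \eqref{ftthmc}) does stand: two distinct congruent disks have exactly two common directed supporting lines, carrying opposite orders of the two contact points, so \eqref{eqpbxcspLndf} cannot be met by two distinct lines in either direction. That part is correct modulo routine details.

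The hard direction, however, is only a plan, and the step you defer is precisely where the entire difficulty of the theorem lives. You propose to rotate $X$ about the centre $O$ of its minimal enclosing disk by a small angle $\phi$, locate a contact point near which $\bnd Y$ crosses $\bnd X$ transversally so that $X\setminus Y$ and $Y\setminus X$ become disconnected, and then ``argue'' that the contact points of two bracketing supporting lines fall into the required differences, leaving this to an unspecified ``continuity and monotonicity analysis.'' Two problems. First, disconnecting $X\setminus Y$ and $Y\setminus X$ only yields that $X$ and $Y$ are \ocrossing{}, and Lemma~\ref{lemmaDiStincT} of this very paper exhibits rotationally congruent $X$ and $Y$ that are \ocrossing{} yet \notwncross{}; by Theorem~\ref{thmthemain} the new notion is strictly stronger, so the topological picture you describe cannot by itself force the four contact-point conditions of \eqref{eqpbxcspLndf}. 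Producing two \emph{distinct} common supporting lines on both of which the first point lies in $X\setminus Y$ and the last in $Y\setminus X$ is the whole content of the theorem, and it is exactly what you have not done. Second, even the setup is underdetermined: the contact set $X\cap\bnd B$ may be a union of arcs or a sparse point set, the common supporting lines of $X$ and its rotated copy are in general not the tangents to $B$ at contact points, and nothing in the sketch identifies which two common supporting lines are to be used or why a single choice of $\phi$ serves both simultaneously. As written, the proposal is a plausible research programme rather than a proof.
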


Next, we clarify the hierarchy of several concepts of crossing. 
Two subsets of $\preal$ are \emph{rotationally congruent} if there is a rotation that takes one of them to the other. By a \emph{quasiorder} (also known as \emph{preorder}) we mean a reflexive transitive relation. \emph{Partial orders} are  antisymmetric quasiorders and a \emph{partially ordered set} (also known as a \emph{poset}) is a pair $\tuple{A;\leq}$ such that $A$ is a nonempty set and   $\leq$ is a partial order on $A$.

\begin{definition}
Let $\CrCo$ denote the set of the four concepts of crossing for planar compact convex sets investigated in this paper; the acronym comes from ``Hierarchy of Crossing Concepts''. For $u,v\in \CrCo$, let $u\leq v$ mean that $u$ implies $v$. That is, $u\leq v$ iff for any compact convex subsets $D$ and $L$ of $\preal$, if $D$ crosses $L$ in the sense of $u$, then $D$ crosses $L$ in the sense of $v$. Also, let $u\rotleq v$ mean that for any compact convex subsets $D$ and $L$ of $\preal$, if $L$ is obtained from $D$ by a rotation and   $D$ crosses $L$ in the sense of $u$, then $D$ crosses $L$ in the sense of $v$. Clearly, both $\leq$ and $\rotleq$ are quasiorders on $\CrCo$.  Note that both $u \leq v$ and $u\rotleq v$ mean that $u$, as a set of pairs of compact convex subsets of $\preal$, is a subset of $v$.
\end{definition}

\begin{figure}[ht] 
\centerline
{\includegraphics[scale=1.0]{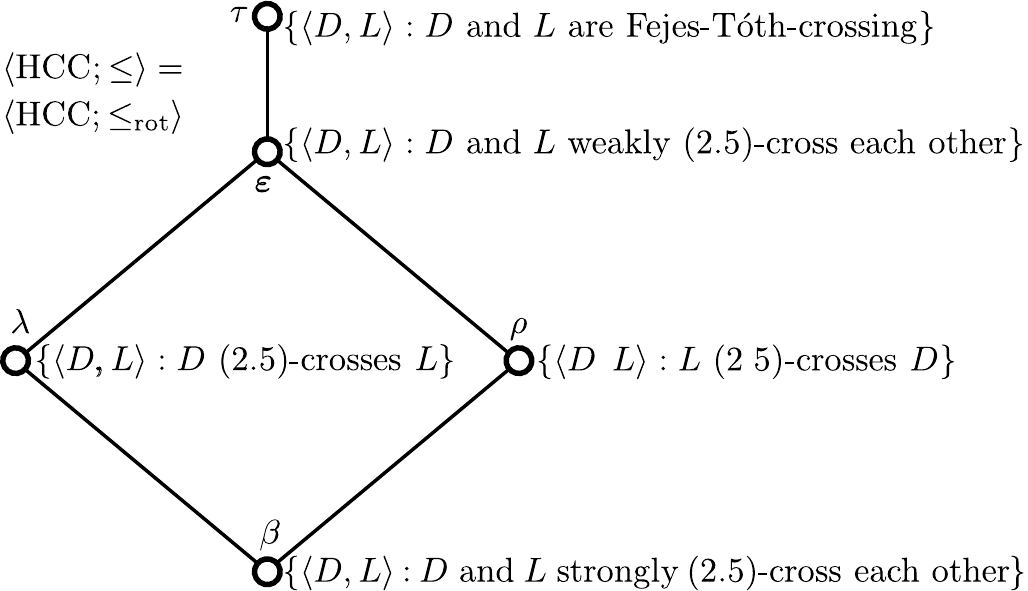}}
\caption{The hierarchy of crossing concepts for compact convex subsets of $\preal$;  $D$ and $L$ stand for compact convex subsets of $\preal$
\label{fignine}}
\end{figure}%

In view of Theorems~\ref{ftthm} and \ref{thmchgcharAB}, the following observation might look a little bit surprising at first sight.

\begin{theorem}[Main Theorem]\label{thmthemain} 
Both $\tuple{\CrCo; \leq}$ and  $\tuple{\CrCo; \rotleq}$ are partially ordered sets, they are the same partially ordered sets, and their common Hasse diagram is the one given in Figure~\textup{\ref{fignine}}. 
\end{theorem}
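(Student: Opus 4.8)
Write $N$ for the relation ``$D$ \ncross{} $L$'', viewed as a set of ordered pairs of compact convex subsets of $\preal$, let $S=N\cap N^{-1}$ and $W=N\cup N^{-1}$ be its symmetric part and its symmetric hull, and let $F$ denote the \ocrossing{} relation; these are the four members of $\CrCo$, since $S$ is \sncross{}-ing and $W$ is \wncross{}-ing. Because $S\subseteq N\subseteq W$ by construction, my plan is to prove exactly one nontrivial positive implication, namely that $W\subseteq F$, and then to refute every remaining implication by a pair $\pair D L$ in which $L$ is a rotated copy of $D$. The second step does double duty: it establishes antisymmetry of both quasiorders, and, because the witnesses are rotationally congruent, it forces $\rotleq$ and $\leq$ to coincide.

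For the positive direction, the inclusions $S\subseteq N\subseteq W$ are immediate from Definition~\ref{defnewcross}, so it remains to show that if $D$ \ncross{} $L$ then $D$ and $L$ are \ocrossing{}; the full statement $W\subseteq F$ then follows because $F$ is symmetric. Here I would start from the two witnessing common supporting lines $t\neq t'$ of \eqref{eqpbxcspLndf}. Each of them meets $D\cup L$ in a segment beginning at a point of $D\setminus L$ and ending at a point of $L\setminus D$, and all four of these touching points lie on $\bnd{\phullo(D\cup L)}$. Reading off the cyclic order of these points along $\bnd{\phullo(D\cup L)}$, one finds that points of $D\setminus L$ and points of $L\setminus D$ alternate, which forces $\bnd D$ to meet $L$ in at least two disjoint arcs and $\bnd L$ to meet $D$ in at least two disjoint arcs. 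Convexity of $D$ and $L$ then splits each of $D\setminus L$ and $L\setminus D$ into at least two components, which is exactly \ocrossing{}.

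For the negative direction, note first that $\leq\,\subseteq\,\rotleq$ holds trivially, since an implication valid for all pairs holds in particular for rotationally congruent ones; thus it suffices to realize every non-implication by a rotated pair, which will also give the reverse inclusion $\rotleq\,\subseteq\,\leq$. Two constructions suffice. First I would build a rotationally congruent pair $\pair D L$ with $D$ \ncross{} $L$ but $L$ \notncross{} $D$: read in this order it refutes $N\subseteq S$ and $W\subseteq S$, and read in the reversed order (so that $\pair L D$ lies in $W$ but not in $N$) it refutes $W\subseteq N$. Second I would build a rotationally congruent pair that is \ocrossing{} yet for which neither set \ncross{} the other, refuting $F\subseteq W$ and hence, transitively, $F\subseteq N$ and $F\subseteq S$. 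In each case I would fix a concrete asymmetric convex set, determine its common supporting lines with the chosen rotated copy, and verify the first/last-point conditions of \eqref{eqpbxcspLndf} directly; for the second pair the protruding components of the symmetric difference are to be arranged so as to avoid all common supporting lines.

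Combining the two directions, the only comparabilities of $\leq$ are $S<N<W<F$, so $\leq$ is a partial order whose Hasse diagram is this four-element chain, matching Figure~\ref{fignine}; and since every non-comparability is witnessed by rotationally congruent pairs, $\rotleq$ has the same comparabilities, whence $\rotleq\,=\,\leq$ and the two posets are identical. I expect the main obstacle to be the implication $W\subseteq F$ together with the first construction: converting the supporting-line formulation of crossing into a count of boundary arcs, and producing a genuinely one-directional crossing by a pure rotation rather than by a reflection, are the points that demand real care rather than routine verification.
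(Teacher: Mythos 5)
Your reading of $\CrCo$ does not match the paper's, and as a result the conclusion you prove is not the statement of the theorem. In Figure~\ref{fignine} the relations ``$D$ \ncross{} $L$'' and ``$L$ \ncross{} $D$'' (your $N$ and $N^{-1}$) are \emph{two distinct elements} of the poset: since $\leq$ and $\rotleq$ compare sets of \emph{ordered} pairs, $N$ and $N^{-1}$ are different members of $\CrCo$, and the whole point of the paper's rounded-hexagon example is that they are incomparable. The Hasse diagram is therefore a five-element poset --- strong crossing at the bottom, the two one-sided crossings side by side above it, weak crossing above those, and \ocrossing{} on top --- not your four-element chain $S<N<W<F$. (The phrase ``four concepts'' in the paper's definition invites your reading, but the proof explicitly establishes $\lambda\rotnleq\rho$ and $\rho\rotnleq\lambda$.) Your first construction is exactly the witness needed for this incomparability, so you have the right ingredient, but you assemble the wrong diagram.

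The more substantive gaps are in the two halves of the argument themselves. First, the negative direction is only a plan: both required examples are nontrivial constructions that carry most of the weight of the theorem. The paper's pair that is \ocrossing{} yet whose members \notwncross{} is a regular octagon with alternate edges replaced by arcs of $(1-x^2)/2$ and $(1-x^4)/4$, arranged so that \emph{every} common supporting line of $X$ and its $90$-degree rotation meets $X\cup Y$ only inside $X\cap Y$; saying that the protruding components ``are to be arranged so as to avoid all common supporting lines'' names the difficulty without resolving it, and the one-directional rotated pair likewise has to be exhibited and verified. Second, in your proof of $W\subseteq F$ the sentence ``convexity then splits each of $D\setminus L$ and $L\setminus D$ into at least two components'' conceals the key step: crossing a hypothetical path in $D\setminus L$ from $U_D$ to $U'_D$ with a hypothetical path in $L\setminus D$ from $U_L$ to $U'_L$ only shows that \emph{not both} differences are connected, whereas \ocrossing{} needs both disconnected. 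One must instead cross the path in $D\setminus L$ with the chord $[U_L,U'_L]\subseteq L$ (and symmetrically), and one must separately treat the degenerate cases where $D$ or $L$ is a point or a segment, to which the paper devotes a substantial part of its Main Lemma. Your alternation-on-the-hull idea is a genuinely different and potentially shorter route to that lemma than the paper's ``ears'' argument, but as written it is an outline, not a proof.
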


\section{Lemmas and proofs}
\label{sectCompMn}
We begin this section with an easy lemma.

\begin{lemma}\label{lemmaDiStincT} There exist \emph{rotationally congruent} compact convex subsets $X$ and $Y$ of $\preal$ such that $X$ and $Y$ are \ocrossing{} but they \notwncross.
\end{lemma}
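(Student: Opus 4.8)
The plan is to exhibit one explicit pair and check the two requirements by hand. The guiding idea is that \nwecrossing{} asks for \emph{two} common supporting lines realizing the interleaving pattern ``first point in $D\setminus L$, last point in $L\setminus D$'', whereas \ocrossing{} only constrains the connectivity of the two differences. So I want a configuration whose differences are disconnected, yet for which almost every common supporting line touches $X$ and $Y$ at one and the same boundary point and is therefore useless for \eqref{eqpbxcspLndf}. Concretely, I would take $Y=-X$, the image of $X$ under the rotation by $\pi$ about the origin (so $X$ and $Y$ are rotationally congruent), where $X=\phullo\set{(0,3),(2,1),(0,-3),(-2,1)}$ is the quadrilateral symmetric in the $y$-axis that is ``fatter'' in the upper half-plane. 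Then $Y=\phullo\set{(0,3),(2,-1),(0,-3),(-2,-1)}$ is fatter in the lower half-plane, and the two sets share exactly the vertices $(0,3)$ and $(0,-3)$.

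First I would verify that $X$ and $Y$ are \ocrossing{}. Comparing the half-widths of $X$ and $Y$ at each height $y$ shows that $X$ is strictly wider than $Y$ for $0<y<3$, that they have equal width on the equator $y=0$, and (by the central symmetry $Y=-X$) that $Y$ is strictly wider for $-3<y<0$. Hence $X\setminus Y$ consists of the two slivers lying in $\set{(x,y):x>0,\ 0<y<3}$ and $\set{(x,y):x<0,\ 0<y<3}$, each pinched to width $0$ at the shared apex $(0,3)$ and again on the equator; the two slivers are separated by the overlap region, so $X\setminus Y$ has two components. By symmetry $Y\setminus X=-(X\setminus Y)$ also has two components, so neither difference is connected.

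Next I would determine \emph{all} common supporting lines and conclude that $X$ and $Y$ \notwncross. Writing $f(\beta)=h_X(\beta)-h_Y(\beta)$ for the difference of the support functions, so that a common supporting line exists exactly in the normal directions $\beta$ with $f(\beta)=0$, the anti-symmetry $f(\beta+\pi)=-f(\beta)$ coming from $Y=-X$, together with a short computation, gives $f\ge 0$ on the upper semicircle of normals and $f\le 0$ on the lower one. Crucially, $f$ vanishes on two whole arcs of directions, the ``polar cones'' in which both $X$ and $Y$ are supported at the shared vertex $(0,3)$, respectively $(0,-3)$, and it changes sign only at the two horizontal normals. For every direction in a polar cone the common supporting line meets $X\cup Y$ only in the corresponding shared vertex, so its first and last points coincide and lie in $X\cap Y$; such a line can never satisfy \eqref{eqpbxcspLndf}. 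The only sign changes occur at the two vertical common supporting lines $x=2$ and $x=-2$: along $x=2$ the first and last points are $(2,-1)\in Y\setminus X$ and $(2,1)\in X\setminus Y$, a single witness for ``$Y$ \ncross{} $X$'', while along $x=-2$ one gets a single witness for ``$X$ \ncross{} $Y$''. Since \eqref{eqpbxcspLndf} requires \emph{two} distinct witnessing lines in a fixed orientation, neither $X$ \ncross{} $Y$ nor $Y$ \ncross{} $X$.

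The step I expect to be the main obstacle is the complete and rigorous enumeration of the common supporting lines: because $f$ vanishes on the two polar cones there is a whole continuum of common supporting lines, and one must argue that every one of them, including the two extreme lines bounding each cone (which run along an edge of $X$ or of $Y$), either touches $X$ and $Y$ in a point of $X\cap Y$ or otherwise fails to produce the ``first in $D\setminus L$, last in $L\setminus D$'' pattern, so that no second witness exists in either orientation. The disconnectedness of the slivers is elementary, but it also needs the small extra check that the two slivers do not actually meet at the shared apex, since they meet only in the excluded point $(0,3)\in X\cap Y$.
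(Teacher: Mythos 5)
Your proposal is correct, and it reaches the lemma by a genuinely different route than the paper. The paper builds $X$ from a regular octagon by replacing alternate edges with arcs of the two concave functions $(1-x^2)/2$ and $(1-x^4)/4$, both tangent to the absolute-value graph at $x=\pm1$, takes $Y$ to be the $90$-degree rotation, and arranges that each of the exactly four common supporting lines meets $X\cup Y$ inside $X\cap Y$; thus \emph{no} common supporting line is even a single witness for \eqref{eqpbxcspLndf}, and the failure of weak crossing is immediate. Your kite $X=\phullo\set{(0,3),(2,1),(0,-3),(-2,1)}$ with $Y=-X$ instead admits exactly one witnessing line per orientation ($x=-2$ for ``$X$ \ncross{} $Y$'' and $x=2$ for ``$Y$ \ncross{} $X$''), and you exploit the requirement in Definition~\ref{defnewcross} that \emph{two distinct} lines $t$ and $t'$ realize the pattern. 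Your verifications check out: the half-width comparison does show that $X\setminus Y$ splits into two slivers in the open upper half-plane separated by the segment of the $y$-axis lying in $X\cap Y$ (and dually for $Y\setminus X$); the zero set of $f=h_X-h_Y$ is exactly the two horizontal normals together with the two closed normal-cone arcs at the shared vertices $(0,\pm3)$, so your enumeration of common supporting lines is exhaustive; and the four boundary lines of the polar cones, which you rightly flag as the delicate case, fail for the robust reason that one of the two extreme points of $(X\cup Y)\cap t$ is the shared vertex lying in $X\cap Y$, so \eqref{eqpbxcspLndf} cannot hold for either ordering regardless of the orientation convention. What your approach buys is a completely elementary polygonal example with explicit coordinates and a purely support-function argument; what the paper's construction buys is a stronger failure mode (no witnessing line at all rather than too few), which makes the non-crossing half of the verification a one-line observation once the curvature comparison $g<f$ on $(-1,1)$ is granted.
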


\begin{figure}[ht] 
\centerline
{\includegraphics[scale=1.0]{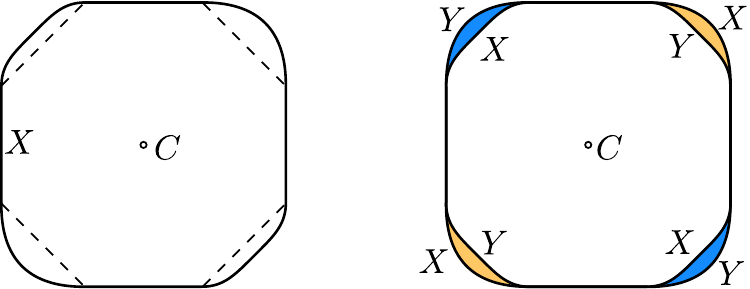}}
\caption{Our construction proving Lemma~\ref{lemmaDiStincT}
\label{figone}}
\end{figure}%

\begin{proof}[Proof of Lemma~\ref{lemmaDiStincT}] Let $X$ be the convex hull of the solid curve given on the left of Figure~\ref{figone}. In order to define it more precisely, consider the graphs $F:=\set{\pair x{f(x)}:-1\leq x\leq 1}$ and $G:=\set{\pair x{g(x)}:-1\leq x\leq 1}$
of the concave real functions $f\colon[-1,1]\to \mathbb R$, $x\mapsto (1-x^2)/2$ and $g\colon[-1,1]\to \mathbb R$, $x\mapsto (1-x^4)/4$, respectively; see Figure~\ref{figtwo}. Both of them are tangent to (the graph of) the absolute value function at $x=-1$ and $x=1$. Next, take a regular octagon. On the left of Figure~\ref{figone}, every second edge of this octagon is given by a dashed line. Replace two opposite dashed edges of the octagon by  congruent copies of $F$, and replace the rest of dashed edges by congruent copies of $G$. So the boundary $\bnd X$ of $X$ consists of four straight line segments, two arcs congruent to $F$, and two arcs congruent to $G$.
(Note at this point that Figure~\ref{figtwo} is scaled differently from Figure~\ref{figone}.)  Next, let rotate $X$  by 90 degrees counterclockwise around the center $C$ of symmetry of the original octagon, and let $Y$ be the compact convex set we obtain in this way. On the right of Figure~\ref{figtwo}, $X\setminus Y$ and $Y\setminus X$ are denoted by light-grey (or yellow) and by dark-grey (or blue) respectively. Clearly, $X$ and $Y$ are \ocrossing. Since $g(x)<f(x)$ for every $x$ from the open interval $(-1,1)$, it follows from our construction that $X$ and $Y$ have exactly four common supporting lines and each of these lines contains one of the non-dashed edges of the initial octagon as an interval. Hence, for every common supporting line $t$, we have that $(X\cup Y)\cap t \subseteq X\cap Y$. Hence, the first point of ($X\cup Y)\cap t$ is in $X\cap Y$ but outside $X\setminus Y$. Therefore, $X$ \notncross{} $Y$. Since the role of $X$ and $Y$ is rotationally symmetric, $Y$ \notncross{} $X$. That is, $X$ and $Y$ \notwncross.
\end{proof}

\begin{figure}[ht] 
\centerline
{\includegraphics[scale=1.0]{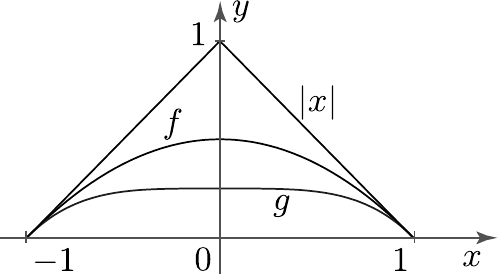}}
\caption{Auxiliary functions for the proof of Lemma~\ref{lemmaDiStincT}
\label{figtwo}}
\end{figure}%

\begin{lemma}[Main Lemma]\label{lemmamain}
Let $D$ and $L$ be nonempty compact convex subsets of the plane $\preal$, then the following two implications hold.
\begin{enumeratei}
\item\label{thmthisa}  If $D$ \ncross{} $L$, then $D$ and $L$ are \ocrossing.
\item\label{thmthisb}  If $D$ and $L$ \wncross{}, then $D$ and $L$ are \ocrossing.
\end{enumeratei}
\end{lemma}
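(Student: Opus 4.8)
The plan is to prove (i) and then read off (ii) for free. Since being \ocrossing{} is symmetric in the two sets, while the conclusion only asks that neither $D\setminus L$ nor $L\setminus D$ be connected, part (ii) is immediate from (i): if $D$ and $L$ \wncross{}, then $D$ \ncross{} $L$ or $L$ \ncross{} $D$, and in either case (i) gives that $D$ and $L$ are \ocrossing. So I concentrate on (i), where $D$ \ncross{} $L$ is witnessed by two distinct common supporting lines $t,t'$ together with the four points $U_D,U_L\in t$ and $U'_D,U'_L\in t'$ of \eqref{eqpbxcspLndf}. The goal is to produce, inside $D\cap L$, a chord of $D$ that separates $U_D$ from $U'_D$ and a chord of $L$ that separates $U_L$ from $U'_L$; this forces both $D\setminus L$ and $L\setminus D$ to be non-connected, which is exactly the \ocrossing{} property.

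The geometric heart of the argument is to pin down the cyclic position of the four points on the boundary of $K:=\phullo(D\cup L)$. First I would note that, being common supporting lines of $D$ and $L$ with both sets on their left, $t$ and $t'$ are supporting lines of $K$ as well, and that $K\cap t$ is exactly the segment $[U_D,U_L]$ while $K\cap t'$ is $[U'_D,U'_L]$: the endpoints of a face of $K$ are extreme points of $K$, hence lie in the compact set $D\cup L$, and along $\dir t$ the extremes of $(D\cup L)\cap t$ are precisely the first point $U_D$ and the last point $U_L$. Thus $[U_D,U_L]$ and $[U'_D,U'_L]$ are two edges of $K$. Since $U_D\in D\setminus L$, $U_L\in L\setminus D$, and likewise for the primed points, the four points are pairwise distinct; moreover, traversing $\bnd K$ counterclockwise runs along each of these edges in the direction $\dir t$, resp.\ $\dir{t'}$ (because $K$ lies on the left), that is, from the $D$-endpoint to the $L$-endpoint. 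Hence the counterclockwise cyclic order on $\bnd K$ is $U_D,\,U_L,\,U'_D,\,U'_L$, and this also forces $U_D\neq U'_D$ and $U_L\neq U'_L$, a single vertex of $K$ being unable to be the counterclockwise start of two distinct edges. Consequently $[U_D,U'_D]$ and $[U_L,U'_L]$ are the two diagonals of a convex quadrilateral inscribed in $K$, so they cross at a point $Z$ in the relative interior of each, and $Z\in[U_D,U'_D]\subseteq D$, $Z\in[U_L,U'_L]\subseteq L$.

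It then remains to convert the crossing of the diagonals into the desired separation. Let $\lne$ be the line through $U_L$ and $U'_L$. Because the diagonal $[U_D,U'_D]$ meets $\lne$ at the interior point $Z$, the points $U_D$ and $U'_D$ lie strictly on opposite sides of $\lne$. Moreover $U_L,U'_L\in L\setminus D$ are outside $D$ and lie on opposite sides of $Z$ along $\lne$, while $Z\in D$; since $D\cap\lne$ is convex and contains $Z$, it is contained in the subsegment $[U_L,U'_L]$, hence in $L$. Now any continuous path in $D$ from $U_D$ to $U'_D$ must cross $\lne$ at a point of $D$, i.e.\ at a point of $D\cap\lne\subseteq L$; so no such path can avoid $L$, and $U_D,U'_D$ lie in different components of $D\setminus L$. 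Interchanging the roles of $D$ and $L$ (equivalently, using the other diagonal $[U_D,U'_D]$ to separate $U_L$ from $U'_L$) yields in the same way a chord of $L$ inside $D$, so $L\setminus D$ is non-connected as well. This is precisely the \ocrossing{} conclusion.

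The step I expect to be the main obstacle — and would write most carefully — is the bookkeeping that establishes the cyclic order $U_D,U_L,U'_D,U'_L$, for it is exactly here that the asymmetric ``first point in $D\setminus L$, last point in $L\setminus D$'' clause of \eqref{eqpbxcspLndf} enters, through the left-hand orientation of the supporting lines. A secondary point needing care is the degenerate configurations: if $D\cup L$ lies in a single line then, as a short check shows, $D$ and $L$ cannot possess two distinct common supporting lines fulfilling \eqref{eqpbxcspLndf}, so the hypothesis of (i) fails and there is nothing to prove; the remaining low-dimensional case (one of the two sets a segment) is covered by the same chord argument, reading $\bnd D$ as the whole segment when $D$ has empty interior.
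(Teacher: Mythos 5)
Your proof is correct, but it takes a genuinely different route from the paper's. The paper works inside $D\cap L$: after disposing of the singleton and empty-interior cases separately, it shows that the interiors of $D$ and $L$ must meet, walks around the Jordan curve $\bnd{(D\cap L)}$, and introduces the four ``ears'' cut off at $U_D,U_L,U'_D,U'_L$; the key steps there are that each of $t$ and $t'$ determines its two ears (so the four ears are distinct), that the two $D$-ears alternate with the two $L$-ears along $\bnd{(D\cap L)}$, and that each ear is a full connected component of $D\setminus L$ or of $L\setminus D$. You instead work in $K:=\phullo(D\cup L)$: the two lines of \eqref{eqpbxcspLndf} expose two edges $[U_D,U_L]$ and $[U'_D,U'_L]$ of $K$, each traversed from its $D$-endpoint to its $L$-endpoint when $\bnd K$ is run through counterclockwise, which pins down the cyclic order $U_D,U_L,U'_D,U'_L$, makes $[U_D,U'_D]$ and $[U_L,U'_L]$ crossing diagonals, and lets the chord $D\cap\lne\subseteq[U_L,U'_L]\subseteq L$ disconnect $U_D$ from $U'_D$ inside $D$, and symmetrically for $L$. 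In effect you have promoted the paper's own argument for the segment case (where it finds an inner point $V$ of $[U_L,U'_L]\cap D$ lying in $L$) to the general case, thereby avoiding the ear machinery and the case split on interiors altogether; what the paper's longer route buys in exchange is explicit structural information, namely the identification of the actual components (the ears) and their alternation around $\bnd{(D\cap L)}$. Two points you should spell out in a full write-up, though neither is a gap: that the endpoints of the exposed face $K\cap t$ lie in $D\cup L$ because extreme points of $\phullo(S)$ belong to $S$ for compact $S$ (Carath\'eodory in the plane suffices), and that none of $U'_D,U'_L$ lies on $t$ and none of $U_D,U_L$ lies on $t'$ --- this follows since a supporting line through a relative-interior point of the edge $K\cap t$ must coincide with $t$ up to orientation --- which is exactly what guarantees that the two edges are disjoint arcs of $\bnd K$ (so the cyclic order is as you claim), that no three of the four points are collinear, and hence that the diagonals meet at a relative-interior point of each and that $U_D,U'_D$ are strictly separated by $\lne$.
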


\begin{proof}[Proof of Lemma~\textup{\ref{lemmamain}}] 
Since we are going to rely on continuity,  we recall some terminology and well-known facts; these facts are summarized in  Cz\'edli and Stach\'o~\cite{czgstacho}. It is well known that 
\begin{equation}
\parbox{10.8cm}{if the interior of a compact convex set $X\subseteq \preal$ is nonempty, then its boundary,  $\bnd D$, is a rectifiable Jordan curve of positive finite length.}
\label{eqpbxRcTfJcRvs}
\end{equation} 
A \emph{pointed supporting line} of a compact convex set $H\subseteq \preal$ is a pair $\pair P\lne$ such that $P\in\bnd H$ and $\lne$ is a supporting line of $H$ through $P$;  it is uniquely determined by $\pair{P}{\dir\lne}$, which belongs to the \emph{cylinder} $\cylin := \preal\times\ucirc$.  
We have proved in \cite{czgstacho} that for every compact convex set $H\subseteq \preal$,
\begin{equation}
\parbox{9.6cm}{$\slcr H:=\{\pair P{\dir\lne} : \pair P\lne$ is a pointed supporting line of $H\}$ is a \emph{rectifiable} simple closed curve.}
\label{eqpbxslhDtGnd}
\end{equation}
In  Cz\'edli and Stach\'o~\cite{czgstacho}, we introduced the term \emph{slide-turning} for pointed supporting lines to express the idea that we are \emph{moving} along $\slcr H$. Unless otherwise stated, we always slide-turn  a pointed supporting line $\pair P\lne$  counterclockwise; this means that both $P$ on $\bnd H$ and $\dir\lne$ on $\ucirc$ go counterclockwise. The same convection applies to points, which always move \emph{counterclockwise} unless otherwise stated. 
The visual meaning of \eqref{eqpbxslhDtGnd} is that we can think of slide-turning as a continuous progression in a finite interval of time; this is why the concept of pointed supporting lines has been introduced. 

After these preliminaries,  we  deal with part \eqref{thmthisa} first. So
assume that $D$ and $L$ are nonempty compact convex subsets of the plane such that $D$ \ncross{} $L$. 

First, for the sake of contradiction, suppose that $D$ or $L$ is a singleton $\set P$. Then slide-turning its supporting lines means that we simply turn a directed line through $P$, and it follows trivially that $D$ and $L$ have at most one common supporting lines. This contradicts our assumption that $D$ \ncross{} $L$. Therefore, we conclude that none of $D$ and $L$ is a singleton.

\begin{figure}[ht] 
\centerline
{\includegraphics[scale=1.0]{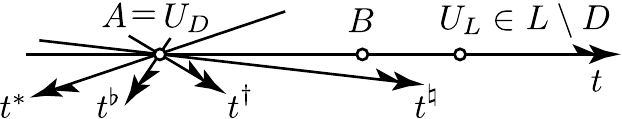}}
\caption{If $D=[A,B]$ is a line segment
\label{figthree}}
\end{figure}%

Second, for the sake of contradiction again, suppose that the interior of  $D$ is empty. Then, since it is not a singleton, $D$ is a line segment with distinct endpoints $A$ and $B$. Suppose that $t$ is a common supporting line of $D$ and $L$ such that $A,B\in t$ and $t$ satisfies the first half of \eqref{eqpbxcspLndf}. 
Choosing the coordinate system appropriately, we can assume that $\dir {t}=0$; see Figure~\ref{figthree}. 
Let  $A<_t B$ (with respect to the orientation of $t$);  otherwise we could change the notation. So we have that $D=[A,B]$. 
Clearly,  $U_D$ from  \eqref{eqpbxcspLndf} is $A$. Using that  $U_L$ from \eqref{eqpbxcspLndf}  is not in $D=[A,B]$,  $D$ is convex, and $U_D<_tU_L$, it follows that $B<U_L$. 
Now, we focus our attention on $t'$ from \eqref{eqpbxcspLndf}. It is distinct from $-t$ since otherwise $U'_L=A=U_D$ would belong to $D$ and this would contradict \eqref{eqpbxcspLndf}. So $t'$ is a supporting line of $D$ with $\dir{t'}\notin\set{0,\pi}$, whereby exactly one of the containments $A\in t'$ and $B\in t'$ holds. 
If $t'$ went through $B$, then $A\in D$ and $U_L\in L$ would be strictly on different sides of $t'$ by $ A<_{t}B<_{t} U_L$,
contradicting \eqref{eqpbxDsPlNdf}. Hence, $t'$ goes through $A$. Since $\dir{t'}\notin\set{0,\pi}$ and since 
$U_L\in L$ is in the left half-plane determined by $t'$, it follows that $\pi<\dir{t'}<2\pi$;  see Figure~\ref{figthree} where $t^\ast$, $t^\flat$, $t^\dag$, and $t^\natural$ indicate some 
possibilities for $t'$. However, then $A=U'_D<_{t'}U'_L$ implies that $U'_L\in L$ is below $t$, that is strictly on the right of $t$, contradicting the fact that $L$ is on the left of $t$. This contradiction shows that no common supporting line of $D$ and $L$ can contain $A$ and $B$, that is, 
\begin{equation}
\parbox{9cm}{$D$ cannot be a subset of $t$ if $t$ satisfies the first half of \eqref{eqpbxcspLndf}.}
\label{eqtxtsdPzbnGjGxB}
\end{equation}
Therefore, still for the case $D=[A,B]$,  it follows that  
\begin{equation}
\parbox{8cm}{every common supporting line satisfying the first half of \eqref{eqpbxcspLndf} contains exactly one of $A$ and $B$.}
\label{eqpbsczhBsmW}
\end{equation}
At present, the role of $A$ and $B$ in \eqref{eqpbsczhBsmW} and that of $t$ and $t'$ is \eqref{eqpbxcspLndf} are symmetric. So \eqref{eqpbsczhBsmW} allows us to assume that  there is a common supporting line $t$ of $D$ and $L$ such that $A\in t$ and $t$ satisfies the first half of \eqref{eqpbxcspLndf}. Then $A=U_D <_t U_L\in L$. We can assume that $A$ and $B$ are on the $x$-axis such that $A <_x B$; see Figure~\ref{figsix}. Then $\pi<\dir t<2\pi$ since $B$ is to the left of $t$. 
For the sake of contradiction, suppose that we can rotate $t$ around $A$ counterclockwise to obtain a common supporting line $t'$ satisfying the second half of  \eqref{eqpbxcspLndf}. By the \emph{positive $A$-ray}  of $t$ we mean the ray $\set{X\in t:  A<_t X}$. 
Similarly, the \emph{negative $A$-ray}  of $t$ is $\set{X\in t:  X <_t A}$; it is often denoted by a dotted ray; see Figure~\ref{figsix}. It is clear by \eqref{eqpbxcspLndf} that $U_L$ is on the positive $A$-ray of $t$. When we rotate $t$ counterclockwise by an angle $\alpha\in (0,2\pi)$, then its positive $A$-ray is also rotated. The point $U'_L$ belongs to the positive $A$-ray of $t'$. The lines $t^\ast$ and $t^\flat$
in Figure~\ref{figsix} and \eqref{eqtxtsdPzbnGjGxB}  indicate that  $\alpha>\pi$ has to hold to obtain $t'$, since otherwise $U_L\in L$ would not be on the left half-plane of $t'$. Furthermore, $t^\natural$ shows that $\pi<\alpha<2\pi$ is impossible, because otherwise the positive $A$-ray of $t'$ is strictly on the right half-plane of $t$ but contains $U'_L\in L$, contradicting the fact that the whole $L$ is on the left of $t$. So $t'$, which is a line through $A$ but distinct from $t$, cannot be obtained from $t$ by rotating it by an angle $\alpha\in(0,2\pi)$, which is impossible.  Hence, we conclude from this contradiction that at most one of $t$ and $t'$ goes through $A$. The same holds for $B$, because $A$ an $B$ play symmetric roles. So, by \eqref{eqpbsczhBsmW}, we can choose the notation so that 
\begin{equation}
\text{For $t$ and $t'$ satisfying \eqref{eqpbxcspLndf},
$A\in t$, $B\notin t$, $A\notin t'$, and $B\in t'$;}
\label{eqtxtsdzvbhBmTN}
\end{equation} 
see Figure~\ref{figseven}.
\begin{figure}[ht] 
\centerline
{\includegraphics[scale=1.0]{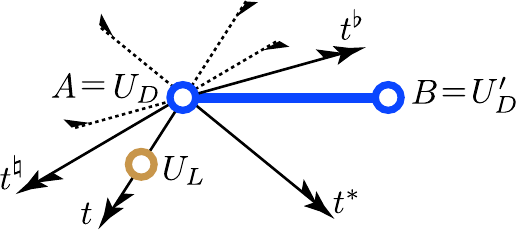}}
\caption{ Illustration for \eqref{eqpbsczhBsmW}
\label{figsix}}
\end{figure}%

We can choose the coordinate system so that $D=[A,B]$ is a horizontal line segment and $A$ and $B$ are on the $x$-axis, $A$ being to the left of $B$; see Figure~\ref{figseven}. Since $B$ is on the left of $t$ and \eqref{eqtxtsdPzbnGjGxB} excludes that $t$ is horizontal, we have that 
$\pi < \dir t <2\pi$, that is, the positive $A$-ray of $t$ is under the $x$-axis. By a similar reason, the positive $B$-ray of $t'$ is above the $x$-axis. Observe that $U_L$ is not on the negative $B$-ray of $t'$ since the first point of $t'\cap(D\cup L)$ with respect to $<_{t'}$ is $B=U'_D$.  
Hence the line segment $[U_L, U'_L]$ intersects $D=[A,B]$ at an inner point $V$. By the convexity of $L$, we have that $V\in L$. But none of $A=U_D$ and $B=U'_D$ is in $L$, whereby it is clear that $D\setminus L$ is not connected. The intersection of the left half-plane of $t$ and that of $t'$ is indicated by (very light) grey in Figure~\ref{figseven}; it can be of a different shape but this does not make a problem. Since $L$ is a subset of this grey area and since $U_L$ and $U'_L$ witness that $L$ contains points below and above the $x$-axes, it follows that $L\setminus D$ is not connected either. Hence, 
$D$ and $L$ are \ocrossing{} if the interior of $D$ is empty.

\begin{figure}[ht] 
\centerline
{\includegraphics[scale=1.0]{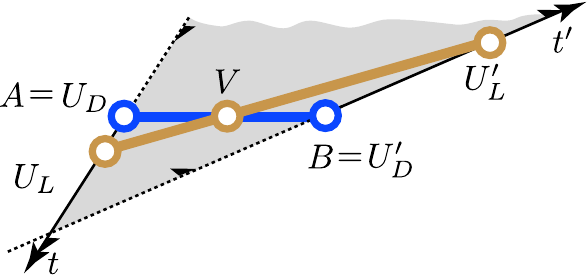}}
\caption{Illustration for \eqref{eqtxtsdzvbhBmTN} \label{figseven}}
\end{figure}%

If the interior of $L$ rather than that of $D$ is empty, then it is easy to modify the argument above to conclude that $D$ and $L$ are \ocrossing{}; the straightforward details are omitted. 

\begin{figure}[ht] 
\centerline
{\includegraphics[scale=1.0]{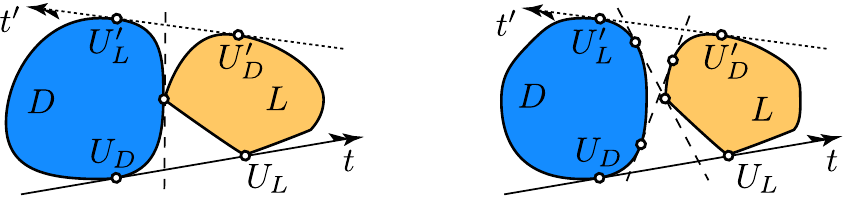}}
\caption{If the interior of $D$ and that of $L$ are disjoint
\label{figfour}}
\end{figure}%

Third, still striving for a contradiction,  for the rest of the proof we suppose that neither the interior of $D$, nor that of $L$ is empty. We claim that 
\begin{equation}
\parbox{7.5cm}{
if the interior of $D$ and that of $L$ are disjoint, then they have only one common supporting line satisfying the first half of condition \eqref{eqpbxcspLndf}.}
\label{eqpbxPTsrenHlsGrn}
\end{equation}
This observation follows from Figure~\ref{figfour}, which carries the generality. By the hyperplane separation theorem, there is at least one non-directed dashed line separating the interior of $D$ and that of $L$; however, such  a line cannot be oriented to satisfy \eqref{eqpbxDsPlNdf}. Furthermore, neither $U'_D\in D\setminus L$, nor $U'_L\in L\setminus D$ holds on the dotted common supporting lines denoted by $t'$, because each of $U'_D$ and  $U'_L$ is in the ``wrong half-plane'' determined by a dashed separating line. Hence,  \eqref{eqpbxPTsrenHlsGrn} follows.

\begin{figure}[ht] 
\centerline
{\includegraphics[scale=1.0]{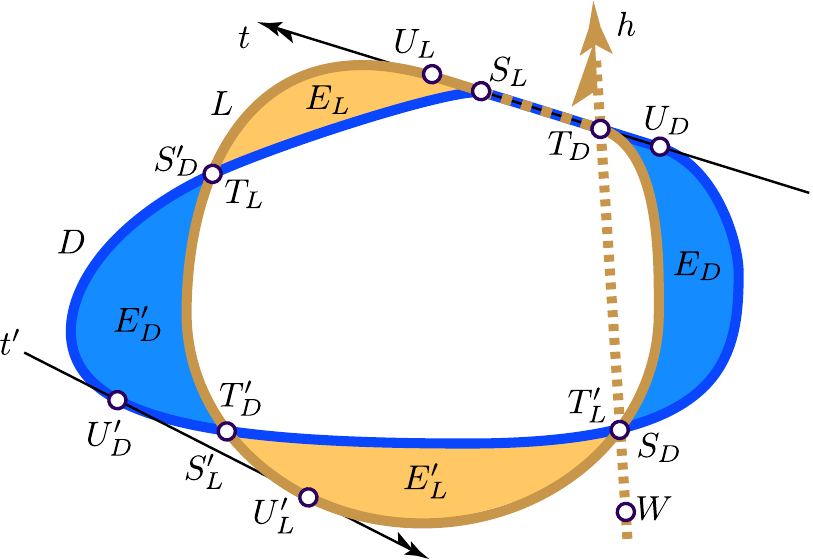}}
\caption{With nonempty interiors,  $D$  \ncross{} $L$ \label{figfive}}
\end{figure}%

Since $D$ \ncross{} $L$, they have at least two common supporting lines   and it follows from  \eqref{eqpbxPTsrenHlsGrn} that the interiors of $D$ and that of $L$ are not disjoint. This implies that the interior of the compact convex set $D\cap L$ is nonempty, whereby \eqref{eqpbxRcTfJcRvs} gives that 
\begin{equation}
\text{$\bnd(D\cap L)$ is a rectifiable Jordan curve of positive finite length.}
\label{eqtxtzhBrtJwfPcG}
\end{equation}

The notation $D$ and $L$ comes from dark-grey and light-grey; which are blue and yellow, respectively, in the colored version of the paper.  Since $D$ \ncross{} $L$, we can pick two common supporting lines $t$ and $t'$ together with the points  occurring in  \eqref{eqpbxcspLndf} such that \eqref{eqpbxcspLndf} holds; see Figure~\ref{figfive}.
Starting from $U_D$ and going on $\bnd D$ clockwise, there is a first point $S_D$ of $\bnd D \cap \bnd L$. Similarly, there is a first point $T_D$ of $\bnd D \cap \bnd L$ if we go counterclockwise. 
We know by \eqref{eqpbxcspLndf} that $U_D\notin\set{S_D,T_D}$.
Analogously, starting from $U_L$ and walking along $\bnd L$ clockwise and counterclockwise, we obtain the first points $S_L$ and $T_L$ of $\bnd L \cap \bnd D$, respectively; see Figure~\ref{figfive}.
It is clear again that $U_L\notin\set{S_L,T_L}$ and $U_D\notin\set{S_D,T_D}$. The points $S_D$, $T_D$ and the arcs between them on the two boundaries define an ``ear'' $E_D$; it is the rightmost  dark region in Figure~\ref{figfive}. This ear is understood so that it does not include the ``light'' arc of $\bnd L$ from $S_D$ and $T_D$ (going counterclockwise). However $E_D$ includes the dark-grey arc from $S_D$ to $T_D$ on $\bnd D$ except for its endpoints $S_D$ and $T_D$. 
 So, $E_D\subseteq D\setminus L$. The points $S_D$ and $T_D$ will be called the \emph{starting point} and the \emph{terminating point} of $E_D$; this explains  $S$ and $T$ in the notation. Similarly, the arcs on the boundaries $\bnd D$ and $\bnd L$ from $S_L$ to $T_L$  form an ear $E_L$; it is the upper light-grey region in the figure, it does not include the ``dark arc'' from $S_L$ to $T_L$ on $\bnd D$ and it is a subset of $L\setminus D$.  The other common supporting line, $t'$, determines the ears $E'_D$ and $E'_L$ and their starting and terminating points $S'_D$, $T'_D$, $S'_L$, $T'_L$ analogously; see Figure~\ref{figfive}.

Some comments on  Figure~\ref{figfive} seem appropriate here. Although $S_L$ is distinct from $T_D$,  the equality $T'_D=S'_L$ indicates that this is not always so. 
As Figure~\ref{figeight} shows, none of the equalities $T_L=S'_D$
and  $T'_L= S_D$  is necessary. Also, 
$T_D \neq S_L$  witnesses that $T'_D = S'_L$ in Figure~\ref{figfive} is not necessary either. Note also that the situation can be much more involved than those in Figures~\ref{figfive} and \ref{figeight}. If we start from an $n$-gon for a large natural number $n$ rather than from an hexagon, then we can easily construct $D$ and $L$  having more than two common supporting lines and more than two ears. Combining this idea with the construction of  Cantor Set, it is not hard to construct compact convex sets $D$ and $L$ that have $\aleph_0$ many ears such that none of these ears has a neighboring ear. The present paper neither needs, nor details this peculiar case, which explains why we  
do not claim that, say, $E'_D$ is next ear after $E_L$ if we go counterclockwise. We claim only the following. 
\begin{equation}
\parbox{7.0cm}{Each of $t$,  $E_D$, and $E_L$ determines the other two. The same holds for $t'$, $E'_D$, and $E'_L$.}
\label{eqpbxRdTrmSpprls}
\end{equation}
By symmetry, it suffices to deal with the first half of \eqref{eqpbxRdTrmSpprls}. Clearly, $t$ determines the ears $E_D$ and $E_L$ by their definitions. Consider the (directed) secant $h$ of $L$ from $S_D$ to $T_D$, it is given by a thick dotted light-grey line in Figure~\ref{figfive}. Let $L^\ast$ be the intersection of $L$ and the closed left half-plane determined by $h$. Since the ear $E_D$ is in the closed right half-plane determined by this secant, so is  $\phullo(E_D)$. 
By the definition of $S_D$ and $T_D$, none of the internal points of the arc of $\bnd L$ between $S_D$ and $T_D$ belongs to $\bnd D$. Hence, going from $T_D$ along $\bnd L$ counterclockwise, $T_L$ is not later then $S_D$, and we conclude that the ear $E_L$ is in the closed left half-plane determined by the secant. In particular, $U_L\in L^\ast$.   The interiors of $\phullo(E_D)$ and $L^\ast$ are disjoint, because they are in opposite half-planes of $h$. 
Thus, applying \eqref{eqpbxPTsrenHlsGrn} to  $L^\ast$  and the convex hull of $E_D$,  we obtain that $E_D$ and $L^\ast$ together determine $t$. But  $E_D$ determines $S_D$, $T_D$ and so $L^\ast$, whereby we conclude that $E_D$ alone determines $t$.
So does $E_L$ by a similar reasoning, or because of (left, counterclockwise)--(right,clockwise) duality.

Next, starting from $S_D$, walk around the rectifiable Jordan curve $\bnd{(D\cap L)}$ until we arrive at $S_D$ again; see \eqref{eqtxtzhBrtJwfPcG}. In other words, we walk \emph{fully around}  $\bnd{(D\cap L)}$. While walking, $E_L$ comes immediately after $E_D$ among the ears. That is, first we walk in the interior of $L$, and the next interior in which we walk is the interior of $D$;
either because $T_D=S_L$, or because the line segment $[T_D, S_L]$ is a subset of $\bnd(D)\cap\bnd (L)$.    Since $t'\neq t$ and \eqref{eqpbxRdTrmSpprls} yield that $E'_D\neq E_D$, it follows that $E'_D$ comes before we reach $S_D$ again. 
So does $E'_L$, since it comes right after $E'_D$ and this part of our walk along $E'_L$ goes in the interior of $L$ while the walk along $E_D$ goes on the boundary of $L$. 
The ears $E_D$ and $E'_D$ will be called the two \emph{$D$-ears} while $E_L$ and $E'_L$ are the two \emph{$L$-ears}. (There can be other ears but we disregard them.)  The argument of this paragraph shows that no matter if we go clockwise or counterclockwise,
\begin{equation}
\parbox{8.5cm}{if we depart from $S_D$ and walk fully around  $\bnd{(D\cap L)}$, then the two $D$-ears alternate with the two $L$-ears.}
\end{equation}
Next, we claim that 
\begin{equation}
\parbox{8.8cm}{The two $D$-ears are connected components of $D\setminus L$ while the two  $L$-ears are connected components of $L\setminus D$.}
\label{eqpbxdhGmsBSv}
\end{equation}
It suffices to deal with $E_D$ since the rest of ears can be handled similarly. Assume that $X\in D\setminus L$ is a point such that there is a continuous curve $g$ within $D\setminus L$ connecting $X$ and a point $Y\in E_D$. 
For the sake of contradiction, suppose that $X\notin E_D$. Then $X$ is in the left half-plane determined by the secant line $h$ while $Y$ is in the right half-plane. By continuity, there is a point $W\in h\cap g$. If $W<_h S_D$,  then $S_D$ is in the interior of the non-degenerate quadrangle formed by four points, $W$, $U_D$, $T_D$, and $S'_D$ of $D$; see Figure~\ref{figfive}. So, since $D$ is convex, $S_D$ has a (small) neighborhood that is a subset of $D$, and this contradicts $S_D\in\bnd D$. 
Replacing $T_D$ by $S_D$, we obtain a similar contradiction if $T_D <_h W$. 
Hence, $S_D\leq_h W\leq_h T_D$, that is, $W\in [S_D,T_D]\subseteq L$, which contradicts $W\in g\subseteq D\setminus L$. This proves 
\eqref{eqpbxdhGmsBSv}. Finally, \eqref{eqpbxdhGmsBSv} yields that $D$ and $L$ are \ocrossing. Thus, part  \eqref{thmthisa} of Lemma~\ref{lemmamain} hold.

Finally, part  \eqref{thmthisb} of Lemma~\ref{lemmamain} follows from part \eqref{thmthisa} and from the fact that \ocrossing{} is a symmetric relation. 
\end{proof}

\begin{figure}[ht] 
\centerline
{\includegraphics[scale=1.0]{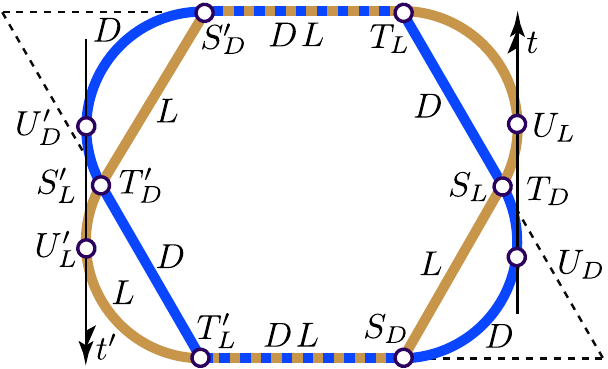}}
\caption{A slightly different arrangement of ears
\label{figeight}}
\end{figure}%

\deletethis{
\begin{align*}
& \tuple{\CrCo; \leq}=\tuple{\CrCo; \rotleq}\cr
&\set{\pair D L: \text{$D$ and $L$ are \ocrossing}} \cr
&\set{\pair D L: \text{$D$ and $L$ \wncross}} \cr
\set{\pair D L: \text{$D$ \ncross{} $L$}} \quad&\quad
\set{\pair D L: \text{$L$ \ncross{} $D$}} \cr
&\set{\pair D L: \text{$D$ and $L$  \sncross}}
\end{align*}
}

\begin{proof}[Proof of Theorem~\ref{thmthemain}]
Let $D$ be the compact convex set we obtain from a regular hexagon with vertices $S_D$, $T_D=S_L$, $T_L$, $S'_D$, $T'_D=S'_L$, and $T_L$ as Figure~\ref{figeight} shows; the notation is borrowed from Figure~\ref{figfive}.
Namely, two opposite edges of the hexagon are replaced by congruent circular arcs that are tangent to the undirected thin dashed lines determined by the neighboring edges. The boundary of $D$ is drawn in dark grey. We obtain $L$ from $D$ by rotating it around the center of the hexagon by $\pi/3$ counterclockwise; $\bnd(L)$ is drawn in light-grey while dark-grey and light-grey alternate on  $\bnd(D)\cap \bnd(L)$. 
 The common supporting lines $t$ and $t'$ witness that 
\begin{equation}
\text{$D$ \ncross{} $L$.}
\label{eqtxtshzTmBw}
\end{equation}
 There are exactly two more common supporting lines $h$ and $h'$; they are horizontal with $\dir h=0$ and $\dir{h'}=\pi$; $h$ and $h'$ are not indicated in the figure.
For each of the four common supporting lines, the first point in the intersection of this supporting line with $L\cup D$ belongs to $D$. Therefore, 
\begin{equation}
\text{$L$ \notncross{} $D$ but they are clearly \ocrossing.}
\label{eqtxtzBmPwRr}
\end{equation}
Denote the elements of $\CrCo$ by 
$\beta$ (bottom), $\lambda$ (left), $\rho$ (right), $\tau$ (top), and $\epsilon$ (else), as shown in Figure~\ref{fignine}. Using \eqref{eqtxtshzTmBw}, \eqref{eqtxtzBmPwRr}, and the fact that we could rename $\pair D L$ to $\pair L D$, we conclude  that $\lambda$ and $\rho$  are  incomparable with respect to $\rotleq$, whence they are also incomparable with respect to $\leq$. For the rest of the proof, note that we need to prove the incomparabilities and the comparabilities only for  $\rotleq$ and only for $\leq$, respectively; we will rely on this remark implicitly.

It is trivial that $\beta\leq \lambda \leq \epsilon$ and $\beta\leq \rho \leq \epsilon$. By Lemma~\ref{lemmamain}, $\epsilon\leq \tau$.
Lemma~\ref{lemmaDiStincT} yields that $\tau\rotnleq \epsilon$.
The pair $\pair D L$ from \eqref{eqtxtshzTmBw} and \eqref{eqtxtzBmPwRr} gives that $\epsilon\rotnleq \rho$ and 
$\lambda\rotnleq \beta$.  
. Hence, after renaming the pair $\pair D L$ to $\pair {L'}{D'}$, we also obtain that $\epsilon\rotnleq \lambda$ and 
$\rho\rotnleq \beta$.  By transitivity, the comparabilities and incomparabilities we have shown above  imply Theorem~\ref{thmthemain}.
\end{proof}

Although ``triviality''  is not a rigorous mathematical concept,
we conclude this section with the following observation.

\begin{observation}\label{observatimpLc}
Theorem~\textup{\ref{ftthm}}, which we cited from Fejes-T\'oth~\cite{fejestoth}, follows trivially from Theorem~\textup{\ref{thmchgcharAB}}, taken from Cz\'edli~\cite{czgcharcircab}, and Theorem~\textup{\ref{thmthemain}}. 
\end{observation}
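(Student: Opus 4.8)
The plan is to derive the equivalence in Theorem~\ref{ftthm} by routing it through Theorem~\ref{thmchgcharAB}, using from Theorem~\ref{thmthemain} only the single comparability $\epsilon\leq\tau$, that is, the fact that \wncross{} implies \ocrossing{}. Because \ocrossing{} is defined only for convex sets and because being a disk already entails convexity, it suffices to treat a nonempty compact \emph{convex} set $X$; this is the case that carries the content of the theorem.

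For the substantive implication, from \eqref{ftthma} to the assertion that $X$ is a disk, I would assume \eqref{ftthma}---no $Y$ congruent to $X$ is \ocrossing{} with $X$---and verify condition \eqref{ftthmc} of Theorem~\ref{thmchgcharAB}. Suppose, towards a contradiction, that there is a $Y$ congruent to $X$ such that $X$ and $Y$ \wncross{}. Since $Y$ is congruent to the convex set $X$, it is itself compact and convex, so part \eqref{thmthisb} of Lemma~\ref{lemmamain} (equivalently, the comparability $\epsilon\leq\tau$ recorded in Figure~\ref{fignine}) applies and yields that $X$ and $Y$ are \ocrossing{}, contradicting \eqref{ftthma}. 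Hence there is no $Y$ congruent to $X$ such that $X$ and $Y$ \wncross{}, which is exactly condition \eqref{ftthmc}; Theorem~\ref{thmchgcharAB} then gives that $X$ is a disk, as required. The reverse implication, from ``$X$ is a disk'' to \eqref{ftthma}, I would dispatch as the direct triviality already acknowledged in the paper: a congruent copy $Y$ of a disk $X$ is again a disk of the same radius, so each of $X\setminus Y$ and $Y\setminus X$ is connected---a crescent, a full disk, or the empty set---and consequently $X$ and $Y$ are never \ocrossing{}.

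The one subtlety to keep straight---and the feature that makes this deduction, but not its converse, trivially available---is the \emph{direction} in which the hierarchy enters. We use $\epsilon\leq\tau$ to pass from \eqref{ftthma} to the formally weaker condition \eqref{ftthmc} that Theorem~\ref{thmchgcharAB} takes as its hypothesis. A symmetric attempt to recover Theorem~\ref{thmchgcharAB} from Theorem~\ref{ftthm} would instead call for $\tau\leq\epsilon$; but Lemma~\ref{lemmaDiStincT} exhibits rotationally congruent \ocrossing{} sets that \notwncross{}, i.e.\ $\tau\rotnleq\epsilon$, so this comparability fails. Thus, although both existential conditions single out disks and are in that sense equivalent, only the present direction is a \emph{trivial} consequence of the other together with the hierarchy, matching the ``but not conversely'' of the abstract. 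There is no computational obstacle here; the whole task is to keep this monotonicity pointed the correct way.
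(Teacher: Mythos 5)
Your argument is correct and is exactly the derivation the paper leaves implicit in calling the observation ``trivial'': the single comparability $\epsilon\leq\tau$ of Theorem~\ref{thmthemain} (equivalently, part \eqref{thmthisb} of Lemma~\ref{lemmamain}) converts condition \eqref{ftthma} into condition \eqref{ftthmc}, Theorem~\ref{thmchgcharAB} then yields the disk, and the reverse implication is the elementary statement that two congruent disks never \ocrossing{}-cross. Your restriction to convex $X$ agrees with the paper's intended scope (compare the abstract), and your closing remark that the converse derivation would need $\tau\leq\epsilon$, which Lemma~\ref{lemmaDiStincT} refutes, is precisely the ``but not conversely'' point the author is making.
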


Although a  true statement is implied by any other statement in principle,  neither Theorem~\textup{\ref{ftthm}}, nor 
Theorem~\textup{\ref{thmthemain}} seems to be useful in the proof of 
Theorem~\textup{\ref{thmchgcharAB}}.

\section{From congruence lattices to the present paper}\label{sectionoutlook}
The purpose of this section is to point out how distant fields of mathematics influenced each other in the progress leading to the present paper. For non-specialists, we mention only that  combinatorics, and geometry, and mainly \emph{lattice theory} occurred among the precursors.
 
The rest of this section is mainly for lattice theorists, 
and even some of them may feel that a part of the concepts below would have deserved definitions. The excuse is that our only purpose is to give a short historical survey to exemplify how certain entirely lattice theoretical problems led to this paper  belonging to geometry; a detailed survey with definitions and theorems would be much longer.

By old results of Funayama and Nakayama~\cite{funanaka}, R.\,P.\ Dilworth (see MathSciNet MR0139551), and Gr\"atzer and Schmidt~\cite{grSch1962}, finite distributive lattices $D$ are, up to isomorphism, exactly the congruence lattices $\Con(L)$ of finite lattices $L$. There are many results stating that $D\cong \Con(L)$ can be achieved by a finite lattice $L$  having ``nice'' properties; see the monograph Gr\"atzer~\cite{gratzerBypicturebook} for a survey.  One of these nice properties is that $L$ is a planar semimodular lattice; this concept was investigated intensively in Gr\"atzer and Knapp~\cite{gratzerknapp1} and \cite{gratzerknapp3},  devoted mostly to the $D\cong \Con(L)$ representation problem. It appeared already in  Gr\"atzer and Knapp~\cite{gratzerknapp1} that the structure of a planar semimodular lattice is well captured by an even more particular lattice, which they called a  \emph{slim planar semimodular lattice}.  (Note that  ``planar'' is automatically understood and so dropped  in some papers.) 

Soon after that  Gr\"atzer and Knapp~\cite{gratzerknapp1} and \cite{gratzerknapp3} made slim semimodular lattices popular, many additional papers started to investigate them; here we mention only Cz\'edli~\cite{czg-mtx},
\cite{czgtrajcolor}, and \cite{czgquasiplanar},
Cz\'edli and Gr\"atzer \cite{czgggresect} and \cite{czgggltsta}, 
Cz\'edli, Ozsv\'art, and Udvari~\cite{czgozsudv}, Cz\'edli and Schmidt~\cite{czgschtJH}, \cite{czgschvisual}, \cite{czgschslim2},  \cite{czgschperm}, and Gr\"atzer~\cite{gratzerBypicturebook}; see also the bibliographic sections of
these papers. In particular, \cite{czgozsudv} deals mainly with slim planar semimodular lattices but has links to group theory and combinatorics. An anonymous referee of  \cite{czgozsudv} pointed out that the lattices from  \cite{czgozsudv} are in close connection with finite convex geometries, which are combinatorial structures.  These structures and equivalent structures  had frequently been discovered by 1985; see Monjardet~\cite{monjardet}. Note that a concept equivalent to that of  finite convex geometries was first discovered within lattice theory; see Dilworth~\cite{dilworthconvgeo} and  Monjardet~\cite{monjardet}. 

Recently, various representation theorems are available for  convex geometries and  for the corresponding lattices; we mention only Adaricheva~\cite{adaricarousel},  Adaricheva and  Cz\'edli \cite{kaczg}, Adaricheva, Gorbunov and Tumanov~\cite{r:adarichevaetal}, Adaricheva and Nation~\cite{kirajbbooksection} and \cite{kajbn},
Cz\'edli~\cite{czgcoord}, Cz\'edli and Kincses~\cite{czgkj},   Kashiwabara,  Nakamura, and Okamoto~\cite{kashiwabaraatalshelling}, and Richter and Rogers~\cite{richterrogers}.  Cz\'edli~\cite{czgcircles} gave a lattice theoretical approach to a new sort of representation, in which some convex geometries were
represented by circles. This paper raised the question which finite convex geometries can be represented. Soon afterwards, Adaricheva and Bolat~\cite{kabolat} proved that not all finite convex geometries; see also Cz\'edli \cite{czgabp} for an alternative proof. The reason of this result is the \emph{Adaricheva-Bolat property}, which is a convex combinatorial property that circles have but most convex geometries do not have. 
Finally, Cz\'edli~\cite{czgcharcircab} proved that the Adaricheva-Bolat property characterizes circles, and \cite{czgcharcircab} is the immediate precursor of the present paper. The question whether ellipses rather than circles are appropriate to represent all finite convex geometries was raised in Cz\'edli~\cite{czgcircles}. This question has recently been answered in negative by Kincses~\cite{kincses}, who presented an Erd\H os-Szekeres type obstruction to such a representation.

\end{document}